\newtheorem{thm}{Theorem}
\newtheorem{lem}{Lemma}
\newcommand{\boxb}{\square_b}
\newcommand{\db}{\overline{\partial}_b}
\newcommand{\dbs}{\overline{\partial}^*_b}
\newcommand{\bt}[1]{{\bar{#1}}}
\newcommand{\Bt}[1]{{\overline{#1}}}
\newcommand{\dom}[1]{\text{Dom}\left(#1\right)}
\begin{document}
\title[Non-compactness of $N_b$.]{Non-compactness of the Neumann operator for the Kohn Laplacian on the Heisenberg ball}

\author{Robert K. Hladky 
}
\address{North Dakota State University Dept. \#2750, 
PO Box 6050, 
Fargo ND 58108-6050}
\email{robert.hladky@ndsu.edu}
\keywords{Kohn Laplacian, tangential Cauchy-Riemannan operator, sub-elliptic, compact operator}

\begin{abstract}
For $1\leq q \leq n-2$, we provide explicit examples to demonstrate non-compactness of the Neumann operator for the Kohn Laplacian acting on $L^2$ $(0,q)$-forms on the unit ball in $(2n+1)$-dimensional Heisenberg space.
  \end{abstract}
  
  \maketitle

\section{Introduction}

The purpose of this short note is to fill a missing gap in the literature. In an otherwise excellent paper, Shaw \cite{ShawP} claimed to demonstrate non-compactness for the Neumann operator $N_b$ for the Kohn Laplacian $\boxb$ acting on $L^2$ $(0,1)$-forms on the Heisenberg ball with Neumann boundary conditions. However, the inequality in the second displayed equation of   \cite[p.162]{ShawP} is incorrect and the argument is invalid. Indeed, as we shall see below, the argument cannot be easily fixed.

The Kohn Laplacian is a formally self-adjoint, second order differential operator associated to the tangential Cauchy-Riemann operator $\db$ on Cauchy-Riemann manifolds. It provides a natural sub-elliptic analogue to the elliptic complex Laplacian on complex manifolds. On closed manifolds the qualitative properties, such as regularity and existence of solutions, of both operators are similar \cite{ShawB,FK}. However, on bounded domains the Kohn Laplacian exhibits significantly worse behavior. For domains with sufficiently nice boundaries, the Neumann operator for the complex Laplacian is known to be compact, see for example \cite{Catlin2, Catlin1, ShawB}. Despite the failure of the argument in \cite{ShawP}, the converse has been widely believed for the Kohn Laplacian. In this note we shall provide a new argument to verify this. 

 As with many non-compactness arguments, the idea in \cite{ShawP} was to construct a bounded orthogonal sequence $(\alpha_k)$  such that $(N_b \alpha_k)$ contains no convergent subsequences.  The $\db$-Neumnan problem on bounded domains, and on the Heisenberg ball in particular, has been studied in detail by the author   \cite{Hladky1,Hladky2,Hladky3}.  The idea was to study a degenerate foliation of the Heisenberg ball by spheres and decompose $\boxb$ into pieces transverse and tangential to this foliation. The tangential pieces could be expressed in terms of the Kohn Laplacian on the standard unit sphere and decomposed into eigenspaces.  The sequence constructed in \cite{ShawP} obtained its orthogonality by projecting onto different eigenspaces on the leaves of the foliation. Unfortunately the eigenvalues associated to such a sequence will  converge to infinity and it follows that if $(\alpha_k)$ is bounded then $(N_b \alpha_k)$ must converge to $0$. To construct a sequence contradicting compactness, it is essentially necessary to work within a single eigenspace on the foliating spheres. Thus an entirely different method of constructing the sequence is required.

Here we shall instead show that there is a function $\lambda$ and infinite dimensional subspace of $C^2$ forms with the property that $\boxb u =  \lambda u$. We shall then be able to provide a contradiction to compactness of $N_b$ using sequences from within this subspace.

\section{Definitions and theorem} 
The $2n+1$ dimensional Heisenberg group is the manifold $\mathbb{H}^n =\mathbb{R}_t \times \mathbb{C}^n_z $  equipped with the CR-structure $\mathbb{L}$ defined as the complex linear span of the vector fields $L_j =\partial_{z^j}+i \bt{z}{j} \partial_t$, $j=1,\dots,n$. The Heisenberg  groups are equipped with a anisotropic family of dilations  $\delta_r(t,z)= (r^2t,rz)$ that preserve $\mathbb{L}$ and a dilation-norm $\|(t,z)\| = \left( t^2+|z|^4 \right)^{1/4} $. The bundle $\mathbb{L}$ and its conjugate $\Bt{\mathbb{L}}$ are annihilated by the contact form $\eta = \frac{1}{2} \left( dt -i \bt{z}^j dz^j + i z^j d\bt{z}^j \right)$. 

We define a functions $w \colon \mathbb{H}^n \to \mathbb{C}$ and $\varrho \colon \mathbb{H}^n \to \mathbb{R}$ by $w = t+i|z|^2$ and $\varrho = 1-|w|^2$ respectively. The closed unit ball in $\mathbb{H}^n$ is given by 
\[ \Omega = \{ (t,z) \in \mathbb{H}^n \colon \|(t,z) \| \leq 1 \} =  \{ \left| w \right|^2 \leq 1  \}=\{\varrho \geq 0 \}.\]
 A $(p,q)$-form on $\mathbb{H}^n$ is a differential form of type
 \[ f = \sum f_{a I\Bt{J}}  \eta^a \wedge dz^I \wedge d\bt{z}^J \]
 where $a,I,J$ are multi-indices with $a \in \{ \emptyset , \{1\} \}$, $|a|+|I|=p$ and $|J|=q$. 
The $\db$ operator on $\mathbb{H}^n$ is then defined on smooth $(p,q)$-forms by
\[ \db f  =\sum \left( \Bt{L}_k  f_{aI\Bt{J}} \right) \; d\bt{z}^k \wedge \eta^a \wedge dz^I \wedge d\bt{z}^J.\]
We shall be concerned with $\db$ acting on the space of $L^2$ $(0,q)$-forms on $\Omega$. Accordingly, we extend $\db$ to be the maximal extension of $\db$ to a linear, closed and densely-defined operator $L^2_{(p,q)}(\Omega) \to L^2_{(p,q+1)}(\Omega)$. We denote by $\vartheta_b$ and $\dbs$ the formal adjoint and $L^2$-adjoint of $\db$ respectively.  It is easy to verify from an integration-by-parts argument that a smooth form $u \in \dom{\dbs}$ if and only if
\begin{equation}\label{E:boundary}  \db \varrho \vee u  =0 \quad \text{ on $\partial \Omega$.}\end{equation}
The Kohn Laplacian $\boxb$ on the unit ball is  the unbounded operator $\boxb $ on $ L^2_{(p,q)}(\Omega)$ defined by
\begin{align} \dom {\boxb} &=\{ u \in \dom{\dbs} \cap \dom{\db} \colon \db u \in \dom{\dbs}, \; \dbs u \in \dom{\db} \},\\
\boxb &= \db \dbs + \dbs \db. \end{align}
It was  shown by Shaw in \cite{ShawP} that $\boxb$ is a closed, densely-defined self-adjoint operator. It also follows from the work of \cite{ShawP} that for $p=0$ and $1\leq q \leq n-2$ there is a bounded Neumann operator $N_b \colon L^2_{(0,q)}(\Omega) \to \dom{\boxb} \subset L^2_{(0,q)}(\Omega)$ such that $\boxb N_b =\text{Id}$ on $L^2_{(0,q)}(\Omega)$ and $N_b \boxb = \text{Id}$ on $\dom{\boxb}$.

To describe the boundary conditions for sufficiently smooth forms, we first fix $s=|z|^2$ so $w=t+is$.  Then $\db \rho = iz^k d\bt{z}^k$. Thus if we define $\Bt{Y}:= \bt{z}^k L_\bt{k}$, the boundary condition \eqref{E:boundary} can be written
\begin{equation}\label{E:boundary2}
\Bt{Y} \lrcorner u =0 \quad \text{ on $\partial \Omega$.}
\end{equation}
Now for a multi-index $J$ with $|J|=q+1$, we define a $(0,q)$-form 
\[ \xi^J = \sum\limits_{k\in J} (-1)^{k-1} \bt{z}^k d\bt{z}^{J \backslash k }.\]
A straightforward computation shows that $\Bt{Y} \lrcorner \xi^J =0$ everywhere. 

\begin{lem}\label{L:NonCompact}
Suppose  $1 \leq q \leq n-2$ and $h$ is any holomorphic function on an open set of $\mathbb{C}$ containing the unit ball.  Let  $J=\{1,\dots,q+1\}$  and define a $(0,q)$-form by
\[ u_h :=h(w)  \dfrac{  (z^n)^l}{s^{q+1} }   \xi^J.\]
Then for sufficiently large $l$, $u_h \in \dom{\boxb}$ and 
\[ \boxb u_h = \dfrac{(q+1)(n+l-q-1)}{s} u_h.\]
\end{lem}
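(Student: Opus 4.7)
The plan is to exploit the algebraic identity $\iota_{\bar{Z}}^{2}=0$ for the vector field $\bar{Z} = \sum_k \bar{z}^k \partial_{\bar{z}^k}$, which simultaneously encodes the Neumann boundary conditions and forces $\vartheta_b u_h = 0$, reducing $\boxb u_h$ to the single calculation of $\vartheta_b \db u_h$.

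The first observation is that $\xi^J = \iota_{\bar{Z}} d\bar{z}^J$, and that on $(0,q)$-forms the operator $\Bt{Y} \lrcorner$ coincides with $\iota_{\bar{Z}}$ because $d\bar{z}^i(\partial_t) = 0$; consequently $\iota_{\bar{Z}}^{2}=0$ immediately yields $\Bt{Y} \lrcorner u_h = 0$ identically, placing $u_h \in \dom{\dbs}$ with $\dbs u_h = \vartheta_b u_h$. The flat Heisenberg formula $\vartheta_b(\phi \omega) = -\sum_j L_j(\phi) \iota_j \omega$ (valid whenever $\omega$ has antiholomorphic coefficients), together with $L_j w = 2i\bar{z}^j$, $L_j s = \bar{z}^j$, and $L_j z^n = \delta_{jn}$, writes each $L_j(h(w)(z^n)^l s^{-q-1})$ in the form $\bar{z}^j A + \delta_{jn} B$ for functions $A,B$ of $(w,z^n,s)$. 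Hence $\vartheta_b u_h = -A \iota_{\bar{Z}} \xi^J - B \iota_n \xi^J = 0$, the second term vanishing because $n \notin J$ under the hypothesis $q \le n-2$.

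For $\db u_h$, using $\db \xi^J = (q+1) d\bar{z}^J$, $\db s = \alpha := \sum_k z^k d\bar{z}^k$, and the CR-closedness of $h(w)$ and $z^n$, I obtain
\[ \db u_h = (q+1) h(w)(z^n)^l s^{-q-1} \bigl( d\bar{z}^J - s^{-1} \alpha \wedge \xi^J \bigr), \]
and the identity $\iota_{\bar{Z}}(\alpha \wedge \xi^J) = (\iota_{\bar{Z}}\alpha)\, \xi^J - \alpha \wedge \iota_{\bar{Z}} \xi^J = s \xi^J$ delivers the boundary condition for $\db u_h$. For $\vartheta_b \db u_h$ I will derive, on the flat Heisenberg group, the commutation identity
\[ \vartheta_b(\alpha \wedge \omega) = -(n-q)\omega - Z\omega - \alpha \wedge \vartheta_b \omega, \qquad Z = \sum_j z^j L_j, \]
from $\iota_j \alpha = z^j$ and $\sum_j d\bar{z}^j \wedge \iota_j \omega = q\omega$ on $(0,q)$-forms. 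Applied with $\omega = \xi^J$ (where both $\vartheta_b \xi^J$ and $Z \xi^J$ vanish by the same antiholomorphic-coefficient argument) this produces $\vartheta_b(\alpha \wedge \xi^J) = -(n-q)\xi^J$. Combined with $\vartheta_b(s^{-q-1} d\bar{z}^J) = (q+1) s^{-q-2} \xi^J$ and the sole surviving product-rule contribution from $L_n((z^n)^l) = l(z^n)^{l-1}$ pairing with $\iota_n(\alpha \wedge \xi^J) = z^n \xi^J$, the arithmetic telescopes to $\vartheta_b \db u_h = \frac{(q+1)(n+l-q-1)}{s} u_h$.

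Finally, membership $u_h \in \dom{\boxb}$ requires $u_h$, $\db u_h$, and $\boxb u_h$ to lie in $L^2(\Omega)$; polar coordinate estimates near the singular locus $\{z=0\}$ reduce this to integrability of $r^{2l+2n-4q-7}\,dr$ at $r=0$, which holds once $l$ is chosen sufficiently large, and a cutoff argument $\chi_\epsilon u_h \to u_h$ with $\chi_\epsilon$ vanishing near $\{z=0\}$ verifies that the classical and distributional derivatives coincide. The main obstacle is the third paragraph: correctly separating the product-rule contributions that vanish via $\iota_{\bar{Z}} \xi^J = 0$ from the isolated $L_n$ contribution that produces the $l(q+1)/s$ factor and from the derivatives of $s^{-q-1}$, $s^{-q-2}$ that combine to $(q+1)(n-q-1)/s$, and confirming the exact cancellations without sign error.
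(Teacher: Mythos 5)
Your proposal is correct, and I checked the arithmetic you left implicit: writing $\phi_0=h(w)(z^n)^l$, the two $h'(w)$ contributions (one from $\vartheta_b$ acting on $(q+1)\phi_0 s^{-q-1}d\bar z^J$, one from the $z^j\xi^J$ part of $\iota_j(\alpha\wedge\xi^J)$ in the second piece) cancel exactly, and the remaining coefficients of $\phi_0 s^{-q-2}\xi^J$ sum to $(q+1)\bigl[(q+1)+(n-q)-(q+2)+l\bigr]=(q+1)(n+l-q-1)$, as required. Your route differs from the paper's in the evaluation of $\boxb u_h$: the paper verifies the same boundary conditions and the same formula for $\db u_h$, but then invokes the component-wise Folland--Stein identity $\boxb(f_{\bar I}d\bar z^I)=-\sum_{k\notin I}L_kL_{\bar k}f_{\bar I}\,d\bar z^I-\sum_{k\in I}L_{\bar k}L_kf_{\bar I}\,d\bar z^I$ and grinds through four families of second-order derivatives, whereas you first observe $\vartheta_b u_h=0$ (via $\iota_{\bar Z}^2=0$ and $\iota_n\xi^J=0$) so that $\boxb u_h=\vartheta_b\db u_h$, and then evaluate that with first-order product rules and the commutator identity $\vartheta_b(\alpha\wedge\omega)=-(n-q)\omega-Z\omega-\alpha\wedge\vartheta_b\omega$, which I verified. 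Your version is more structural: the cancellations that appear as numerical coincidences in the paper's component computation become the algebraic facts $\iota_{\bar Z}\xi^J=0$ and $\iota_n\xi^J=0$, and it avoids quoting Folland--Stein entirely; the cost is that you must justify the adjoint formula $\vartheta_b=-\sum_jL_j\iota_j$ and the commutation identity, which the paper sidesteps. Two small remarks: your domain argument is more elaborate than necessary, since for $l\gg 2(q+1)$ the bound $s\geq|z^n|^2$ makes $u_h$ genuinely $C^2$ up to $\{z=0\}$ and the boundary, so the cutoff near the singular locus can be dispensed with (this is the paper's one-line argument); and when you assert the validity of $\vartheta_b(\phi\omega)=-\sum_jL_j(\phi)\iota_j\omega$ for $\omega$ with antiholomorphic coefficients you should note explicitly that this uses $L_j=\partial_{z^j}+i\bar z^j\partial_t$ annihilating functions of $\bar z$ alone, so that $\sum_j\phi\,L_j(\iota_j\omega)$ drops out.
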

 
 \begin{proof}

 Now, as noted earlier, it is easy to see that $\Bt{Y} \lrcorner u_h =0$.  Furthermore 
\[ \db u_h = - (q+1) s^{-1} z^k dz^\bt{k} \wedge u_h + (q+1) h(w)(z^n)^l s^{-(q+1)} d\bt{z}^1 \wedge \dots \wedge  d\bt{z}^{q+1} .\]
As $ \Bt{Y} \lrcorner  d\bt{z}^1 \wedge \dots \wedge  d\bt{z}^{q+1} = \xi^J$, 
we also have $\Bt{Y} \lrcorner \db u_h =0$. Now for very large $l >> 2(q+1)$, $u_h$ is in $C^2(\Bt{\Omega})$ and hence $u_h \in \dom{\boxb}$.  

We can now use the usual formula of Folland and Stein \cite{FollandStein} that on sufficiently smooth $(0,q)$-forms 
\[ \boxb (f_\bt{I} d\bt{z}^I) = -\sum_{k \notin I} (L_k L_\bt{k} f_\bt{I}) \;  d\bt{z}^I - \sum_{k \in I} ( L_\bt{k}  L_k  f_\bt{I}) \;d\bt{z}^I.\]
For the $(0,q)$-form $u_h$ and $1 \leq k \leq q+1$, we have
\[ (u_h)_{\Bt{J \backslash k}}  = (-1)^{k-1} h(w)  \dfrac{  (z^n)^l}{s^{q+1} }  \bt{z}^k .  \]
 By direct computation, we see that for $ j \in J\backslash k$ and $ q+1<m<n$
\begin{align*}
 L_{\bt{j}} L_j   \left( h(w) s^{-a} \right)  &=  i h^\prime(w) s^{-(a+1)} ( s - a z^j \bt{z}^j ) + a h(w) s^{-(a+2)}  ( (a+1) z^j \bt{z}^j-s ),  \\
L_k L_{\bt{k}}  \left( h(w) \bt{z}^k s^{-a}  \right)&=  i h^\prime(w) s^{-(a+1)} \bt{z}^k ( s-a z^k \bt{z}^k )   \\ & \qquad + a h(w) s^{-(a+2) } \bt{z}^k ( -2s  +(a+1) z^k \bt{z}^k),  \\
L_m L_{\bt{m}}  \left( h(w)  s^{-a} \right) &=-ia h^\prime(w) s^{-(a+1)} z^m \bt{z}^m \\
  & \qquad  + a h(w) (z^m)^l s^{-(a+2)}  ( (a+1) z^m \bt{z}^m -s ),\\
  L_n L_{\bt{n}}  \left( h(w) (z^n)^l  s^{-a} \right) &=-ia h^\prime(w) s^{-(a+1)} (z^n)^{l+1} \bt{z}^n \\
  & \qquad  + a h(w) (z^n)^l s^{-(a+2)}  ( (a+1) z^n \bt{z}^n -s (l+1 )). \end{align*}
  From this it is easy to see that
  \begin{align*}  \boxb  &\left(  \frac{ h(w) (z^n)^l }{s^a}  d\bt{z}^{J\backslash k} \right) =  -\frac{i h^\prime(w) (z^n)^l  }{ s^{a+1}}  \left( qs+s-as \right)  \\
  & \qquad -  \frac{a h(w) (z^n)^l}{ s^{a+2}} \left(  (a+1)s  -qs -2s - (n-q-2)s - (l+1)s \right). \end{align*}
The required identity is a simple consequence of the case $a=q+1$.

\end{proof} 

 We can now prove our non-compactness result.
  
 \begin{thm}\label{T:NonCompact}
 If $1 \leq q \leq n-2$, the Neumann operator $N_b$ on $L^2_{(0,q)} (\Omega)$ is not compact.
 \end{thm}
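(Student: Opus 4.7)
The plan is to exploit Lemma \ref{L:NonCompact} to construct a bounded sequence $(f_k)\subset L^2_{(0,q)}(\Omega)$ for which $(N_b f_k)$ admits no $L^2$-convergent subsequence. The key observation is that the ``eigenvalue'' $(q+1)(n+l-q-1)/s$ is a \emph{function} rather than a scalar; by choosing holomorphic weights $h$ whose mass concentrates near $\partial\Omega$, where $s$ is forced to stay bounded above $0$ along any sequence approaching $|w|=1$, we can keep $\|\boxb u_h\|_{L^2}$ comparable to $\|u_h\|_{L^2}$ while the normalized sequence escapes weakly to $0$.

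Concretely, fix $l$ large enough for the hypotheses of Lemma \ref{L:NonCompact}, set $h_k(w):=w^k$, $u_k:=u_{h_k}$, $C:=(q+1)(n+l-q-1)$, and write $u_k = w^k\,U$ with $U:=(z^n)^l s^{-(q+1)}\xi^J$. Passing to polar coordinates $z=\rho\omega$ on $\mathbb{C}^n$ (so $s=\rho^2$) and substituting $w=t+i\rho^2$ decouples $\int_\Omega |u_k|^2\,dV$ into an angular factor on $S^{2n-1}$ and a weighted integral on the upper half unit disk $D\subset\mathbb{C}$. Writing
\[ A := \int_{S^{2n-1}}\! |\omega^n|^{2l}\bigl(|\omega^1|^2+\cdots+|\omega^{q+1}|^2\bigr)\,d\sigma,\quad B_\beta := \int_0^\pi\! \sin^\beta\theta\,d\theta,\quad \alpha := l-2q+n-2, \]
polar coordinates $w=re^{i\theta}$ on $D$ then yield the explicit formulas
\[ \|u_k\|_{L^2}^2 = \frac{A\,B_\alpha}{2(\alpha+2k+2)}, \qquad \|\boxb u_k\|_{L^2}^2 = \frac{A\,C^2\,B_{\alpha-2}}{2(\alpha+2k)}. \]
Both Beta integrals are finite because large $l$ forces $\alpha>1$. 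Hence the normalized sequence $v_k:=u_k/\|u_k\|_{L^2}$ satisfies $\|v_k\|_{L^2}=1$ and $\sup_k \|\boxb v_k\|_{L^2}<\infty$.

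Next I would verify $v_k\rightharpoonup 0$ weakly in $L^2_{(0,q)}(\Omega)$. For any smooth, compactly supported $(0,q)$-test form $\phi$ on $\Omega^\circ$, compactness of $\mathrm{supp}\,\phi$ in $\{|w|<1\}$ gives $r_\phi<1$ with $|w|\leq r_\phi$ on $\mathrm{supp}\,\phi$. The pointwise identity $|u_k| = |w|^k |U|$ and Cauchy--Schwarz then yield
\[ |\langle u_k,\phi\rangle| \leq r_\phi^k\,\|U\|_{L^2}\|\phi\|_{L^2}, \]
so that $|\langle v_k,\phi\rangle| \leq \mathrm{const}\cdot\sqrt{k}\,r_\phi^k \to 0$. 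Density of such $\phi$ in $L^2_{(0,q)}(\Omega)$, together with the uniform bound $\|v_k\|=1$, upgrades this pointwise decay to weak convergence $v_k\rightharpoonup 0$.

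Finally, set $f_k := \boxb v_k$. Then $(f_k)$ is bounded in $L^2$; since $v_k\in\dom{\boxb}$, we have $N_b f_k = v_k$, and the combination $\|v_k\|_{L^2}=1$ with $v_k\rightharpoonup 0$ forbids any $L^2$-convergent subsequence of $(N_b f_k)$. This contradicts compactness of $N_b$. The principal technical ingredient is the separation-of-variables computation of the two norms; once $\alpha>1$ is secured by taking $l$ large, it reduces to elementary Beta function identities and I anticipate no further obstacle.
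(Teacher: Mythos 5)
Your proposal is correct and follows essentially the same route as the paper: both feed powers of $w$ into Lemma \ref{L:NonCompact}, bound $\|\boxb u_{h_k}\|$ by trading $s^{-1}$ against the large weight $s^{n+l-2q-1}$ on $D$, and conclude that the bounded sequence $\boxb u_{h_k}$ has $N_b$-images with no convergent subsequence --- the only difference being that you certify non-convergence via weak-nullity of the normalized $v_k$, whereas the paper uses orthogonality of the $\zeta^{4k}$ on a fixed sector $K$ to get uniform pairwise separation. Your explicit Beta-integral constants carry a harmless off-by-one in the exponent of $s$ (the change of variables $s=\rho^2$ gives the weight $s^{n+l-2q-1}$, cf.\ \eqref{E:norm}), but this affects neither the boundedness of $\|\boxb v_k\|$ nor any later step.
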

 
 \begin{proof}
 
 First fix $l>>0$ and  note that  $\langle u_{h_1}, u_{h_2} \rangle =  \frac{ h_1 \Bt{h_2} }{s^{2q+2} }  |z^n|^{2l} \sum\limits_{k=1}^{q+1} \left| z^k \right|^2$. Let  $D = \{ \zeta \in \mathbb{C} \colon \left| \zeta \right| \leq 1, \text{Im}(\zeta) \geq 0\}$ and set $\omega = \int_{\mathbb{S}^{2n-1}}  |z^n|^{2l} \sum\limits_{k=1}^{q+1} z^k \bt{z}^k d\sigma$ where $\mathbb{S}^{2n-1}$ is the unit ball in $\mathbb{C}^{2n}$. Then if we switch to polar coordinates (and recall that $s=|z|^2$ rather than $|z|$), we see that, for some constant $C>0$ independent of $h$,  \begin{equation}\label{E:norm} \| u_h \|^2_{L_{(0,q)} ^2(\Omega)} = C \omega \int_D  \left| h(\zeta)\right|^2 s^{n+l-2q-1} dV_D \end{equation}
 where we also use $s$ to denote $ \text{Im}(\zeta)$. For convenience, we renormalize  the norm on $L^2_{(0,q)}$ so that $C\omega =1$.
  
Now choose $h_k(\zeta) =\sqrt{8k+2} \zeta^{4k}$ and let $K=\{ re^{i \theta} \colon 1/2 < r <1, \pi/4 < \theta < 3\pi/4\}$.  Then it is easy to see that  for $k >j>0$, $h_j$ and $h_k$ are orthogonal on $L^2(K)$ and so
\[ \int_K \left| h_k - h_j\right|^2 dV_D  = \int_K \left|h_k\right|^2 + \left|h_j\right|^2 dV_D\geq \frac{\pi}{4} + \frac{\pi}{4} =\frac{\pi}{2} . \]
From this it follows that
\begin{equation}\label{E:norm2}
\begin{split}
 \| u_{h_k} - u_{h_j} \|^2_{L_{(0,q)} ^2(\Omega)} & = \| u_{(h_k-h_j)} \|^2_{L^2(\Omega)}  = \int_D \left| h_k -h_j \right|^2 s^{n+l-2q-1} dV_D \\
 & \geq \int_K \left| h_k -h_j \right|^2 s^{n+l-2q-1} dV_D \\
 &  \geq  \left( \frac{\sqrt{2}}{4} \right)^{n+l-2q-1} \frac{\pi}{2}. 
 \end{split}
 \end{equation}
Furthermore, since $l$ has been chosen to be very large, $s^{n+l-2q-3} \leq 1$ on $D$. Thus
\begin{equation}\label{E:norm3} \left\| s^{-1} u_{h} \right\|_{L^2_{(0,q)}(\Omega)}^2 \leq \int_D \left| h(\zeta) \right|^2 s^{n+l-2q-3} dV_D \leq \int_D \left| h(\zeta) \right|^2 \; dV_D \leq 2\pi. \end{equation}
Now $u_{h_k}$ is a  sequence of forms in $C_{(0,q)} ^2(\Bt{\Omega}) \cap \dom{\boxb}$. By   \eqref{E:norm2}  there are no convergent subsequences in $L_{(0,q)} ^2(\Omega)$.  By Lemma \ref{L:NonCompact} and \eqref{E:norm3}, the sequence $\boxb u_{h_k}$ is bounded in $L_{(0,q)} ^2(\Omega)$. It follows trivially that $N_b$ is not compact.
  
 \end{proof}
 
 One consequence of this theorem is that the range of $N_b$ cannot be contained in any positive Sobolev space $H^\epsilon(\Omega)$ as $N_b$ would need to be compact by the Rellich Lemma. Thus solutions to $\boxb u =f$ do not globally gain in Sobolev regularity on the Heisenberg ball. Gains in weighted regularity are described in \cite{Hladky1,Hladky2,Hladky3}.
  
 As a final remark, we note that the Heisenberg ball has two characteristic boundary points, $(\pm 1, 0)$ where the distribution $\mathbb{L}$ is tangent to the boundary.  The presence of characteristic points typically complicates any analysis of $\db$ or $\boxb$ on the domain. However, this  is not a contributing factor to Theorem \ref{T:NonCompact}. Indeed, an almost identical argument could be applied to the domain $\{ | w-2i| < 1\}$ which has completely non-characteristic boundary.

 \end{document}